\theoremstyle{plain}
 \newtheorem{theorem}{Theorem}[section]
 \newtheorem{proposition}{Proposition}[section]
 \newtheorem{lemma}{Lemma}[section]
 \newtheorem{corollary}{Corollary}[section]
 \newtheorem{definition}{Definition}[section]
 \numberwithin{equation}{section}
\newcommand{\be}{\begin{eqnarray}}
\newcommand{\beq}{\begin{eqnarray*}}
\newcommand{\ee}{\end{eqnarray}}
\newcommand{\eeq}{\end{eqnarray*}}
\newcommand{\bp}{\begin{prob}}
\newcommand{\ep}{\end{prob}}
\newcommand{\bs}{\begin{sol}}
\newcommand{\es}{\end{sol}}
\newcommand{\tri}{\triangleleft}
\newcommand{\trieq}{\trianglelefteq}
\newcommand{\email}[1]{\href{mailto:#1}{\nolinkurl{#1}}}
\def\RR{{\mathbb R}}
\def\L{{\mathcal L}}
\def\P{{\mathcal P}}
\def\EE{{\mathbb E}}
\def\PP{{\mathbb P}}
\def\F{{\mathcal F}}
\def\X{{\mathcal X}}
\def\K{{\mathcal K}}
\def\D{{\mathcal D}}
\def\esssup{\text{ess sup\,}} 
\def\essinf{\text{ess inf\,}}
\begin{document}

\title{Additive consistency of risk measures and its application to risk-averse routing in networks\thanks{This work was supported by FONDECYT 1100046 and N\'ucleo Milenio Informaci\'on y Coordinaci\'on en Redes ICM/FIC P10-024F.}}
\pagestyle{myheadings}
\markboth{R Cominetti and A. Torrico}{Risk-averse routing in networks}

\author{Roberto Cominetti$^1$
 and Alfredo Torrico$^{2}$
\\[5mm]
\small $\!^1$Universidad de Chile\\
\small Departamento de Ingenier\'{\i}a Industrial\\
\small Santiago, Chile (\email{rccc@dii.uchile.cl})\\[4mm]
\small $\!^2$Universidad de Chile\\
\small Departamento de Ingenier\'{\i}a Matem\'atica\\
\small Santiago, Chile (\email{atorrico@dim.uchile.cl})
}
\date{}
\maketitle

\begin{abstract}
This paper investigates the use of risk measures and
theories of choice for modeling risk-averse route choice 
and traffic network equilibrium with random 
travel times. We interpret the postulates of these theories in the context 
of routing, and we identify {\em additive consistency} as 
a plausible and relevant condition that allows to reduce 
risk-averse route choice to a standard shortest path problem. 
Within the classical theories of choice under risk, we show that 
the only preferences that satisfy this consistency property are the 
ones induced by the entropic risk measures. 
\end{abstract}



\section{Introduction}
Drivers are aware that travel time cannot be reliably predicted
and is subject to random fluctuations arising from a multitude of factors such as congestion, 
weather conditions, accidents and traffic incidents, bottlenecks, traffic light disruptions, unexpected actions by
pedestrians and other drivers, and so on. Even on a specific road
segment at a specific time of the day, travel time exhibits a stochastic 
pattern that can be roughly approximated by the log-normal or Burr distributions \cite{denmark, susilawati}.
Thus, choosing a route to travel from a given origin to a destination is essentially a matter of 
comparing random variables. A basic question here is to understand the mechanisms 
by which these choices are made. While this calls for modeling the actual behavior of drivers, it 
can also be approached from a normative angle by asking which are the properties that characterize a
rational route choice under risk.
A related issue is to understand the consequences of risk-averse behavior upon congestion and 
the traffic equilibrium that is obtained.  Answering these questions may change the way in which we 
model traffic and can be relevant for network design and traffic control.

Route preferences vary among individuals and also depending on trip purpose. 
Compare for instance a situation in which you must arrive on time to an important meeting, 
with that of a tourist strolling leisurely through the city, or still a fire truck heading towards an
emergency. While a risk neutral driver may only care about the expected 
travel time, a risk-averse user will be more concerned with travel time reliability. 
Modeling such variety of behaviors has been approached with different tools. Mean-risk models ---with risk 
quantified by the expected value plus the standard deviation--- 
were considered by Nikolova and Stier-Moses \cite{stier1} to study both atomic and non-atomic equilibria.
An algorithm to compute mean-stdev optimal paths was given by Nikolova {\em et al.} \cite{nikolova1, nikolova2}. Route choice using
$\alpha$-percentiles was investigated by Ordo\~nez and Stier-Moses \cite{ordonez1} 
and Nie \cite{nie3}, the former considering also an approach using robust optimization.
Yet another proposal by Nie and Wu \cite{nie1} uses preferences based 
on the on-time arrival probability. An algorithm for this objective function was also given
by Nikolova {\em et al.} \cite{nikolova2}. Finally, Nie {\em et al.} \cite{nie2,wu1} develop a model that uses
stochastic dominance constraints. 
For a more detailed account of these and other relevant references we refer to \S\ref{relatedwork} and to the
literature review included in \cite{stier1}.

\begin{figure}[!htb]
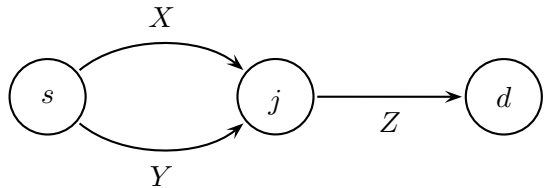

\centering
\psframebox*{%
\rule[-1.2cm]{0pt}{2.4cm}
$\psmatrix[mnode=Circle,radius=5mm,colsep=2.0cm,rowsep=1.5cm,arrowscale=1.5]
[name=1] s &  [name=2] j &  [name=3] d
\ncline[nodesep=1pt]{->}{2}{3}_{Z}
\ncarc[nodesep=1pt,arcangle=-40]{->}{1}{2}_{Y}
\ncarc[nodesep=1pt,arcangle=40]{->}{1}{2}^{X}
\endpsmatrix$ }
\caption{A paradoxical route choice}
\label{fig:fig0}
\end{figure}
In all the approaches just mentioned it may happen that a risk $X$
is preferred to $Y$ but the preference is reversed when we add 
an independent risk $Z$. In the simple network illustrated in figure \ref{fig:fig0}
this means that if we go from $s$ to $j$ our best choice is the upper link, 
but if we extend our trip to $d$ then we must change our choice to the lower link. This
may appear as paradoxical. For a concrete example, consider the mean-stdev 
map $\rho_\gamma^{std}(X)=\EE(X)+\gamma\sigma(X)$ with $\gamma=1$ and 
independent normal variables $X\sim N(11,1)$, $Y\sim N(10,5)$, $Z\sim N(10,2)$,
where $\rho_\gamma^{std}(X)=12<\rho_\gamma^{std}(Y)=10+\sqrt{5}$, but 
$\rho_\gamma^{std}(X+Z)=21+\sqrt{3} >\rho_\gamma^{std}(Y+Z)=20+\sqrt{7}$. 
In this paper we use the theories of choice and risk measures to characterize the 
so-called {\em additive consistent} preferences that are free from these paradoxes. 
We prove that these are exactly the  preferences associated with the {\em entropic risk measures}.

The general theory of {\em choice under risk} is a well established field with a long
history. In this setting, an agent is 
described by a preference relation over a set of random variables 
(or their distributions). Under suitable conditions these preferences can be 
represented by a scalar function. Representations by
expected utilities were already considered by Bernoulli \cite{bernoulli} and further 
developed by Kolmogorov \cite{kolmogorov}, Nagumo \cite{nagumo}, de Finetti \cite{definetti}, 
and von Neumann and Morgenstern \cite{vnm1} (see also \cite[Fishburn]{fishburn2}).
Expected utilities were used by Arrow \cite{arrow1} and Pratt \cite{pratt1} 
to define a local index of {\em absolute risk aversion} that reflects the risk attitudes 
of an agent. However, empirical evidence shows
that agents do not always conform to the postulates of expected utility theory, and
the crucial {\em independence axiom} is sometimes violated (see \cite[Allais]{allais1}, \cite[Ellsberg]{ellsberg1}, 
 \cite[Kahneman and Tversky]{tversky1}). By modifying the independence axiom, several 
alternative representations have emerged: the dual theory of choice by Yaari \cite{yaari1}, 
the anticipated utility theory by Quiggin \cite{quiggin1}, the rank-dependent expected utility theory 
by  Wakker \cite{wakker1} and Chateauneuf \cite{chate1}, and Schmeidler's approach \cite{schmeidler1} 
based on subjective probabilities.

On the other hand, the extensive use of Value-at-Risk in finance gave birth to the notion of {\em risk measure} as an
alternative tool for studying choice under risk. The axiomatic approach to risk measures was initiated by 
Artzner {\em et al.} \cite{artzner1}, who also introduced  the Average Value-at-Risk 
as a {\em coherent} risk measure that overcomes some limitations of Value-at-Risk.
Mean-risk functionals have also been considered in this context, notably by Ogryczak and 
Ruszczynski \cite{rusz4} who studied a risk measure that combines the expected value
and the standard semi-deviations. To some extent, risk measures can be unified with the theories of choice  through 
the concept of premium principles (see Gerber \cite{gerber1}, Goovaerts {\em et al.} 
\cite{goovaerts2,goovaerts4}, Denuit {\em et al.} \cite{denuit1}, and Tsanakas and Desli \cite{tsanaka1}).
A recent account of theories of choice and risk measures can be found in the book by F\"ollmer and Schied
 \cite{follmer3}.

 {\sc Our contribution:} 
In this paper we investigate the use of risk  measures and theories of choice to model risk-averse routing. 
The interpretation of the postulates in this context leads us to identify {\em additive consistency} as a plausible 
and relevant condition that extends the notion of {\em translation invariance} and reduces risk-averse route 
choice to a standard shortest  path problem. We briefly discuss how this allows to formulate risk-averse 
equilibrium models for atomic and non-atomic network flows, which naturally fit in the framework of congestion 
games. We then investigate additive consistency in some standard settings of theories 
of choice proving that, within the classes of {\em distorted risk measures} as well as 
{\em rank dependent utilities}, the only maps that satisfy additive consistency  are the entropic risk measures. We also show that these
are the only {\em expected utility} maps that are translation invariant, hence the only 
risk measures in this class. These results extend Gerber \cite{gerber1}, Goovaerts {\em et al.} \cite{goovaerts1}, Heilpern \cite{heil1}, and Luan \cite{luan1}.

 {\sc Structure of the paper:} In section \S\ref{riskmeasures} we recall the postulates of 
risk measures and their induced preferences, interpreting them in the context of 
route choice. We introduce the concept of {\em additive consistency} and discuss 
its application to risk-averse path choice and network equilibrium. In \S\ref{Stres}
we consider consecutively the classes of expected utility maps (\S\ref{exputilities}), 
distorted risk measures (\S\ref{dualchoice}), and rank dependent utilities (\S\ref{rankutilities}), proving that 
within each of these classes the entropic risk measures are the only ones that satisfy translation 
invariance and/or additive consistency. In \S\ref{dynrisk} we make some remarks on the use 
of dynamic risk measures as an alternative to model route choice, and we conclude in \S\ref{relatedwork} 
with a brief discussion of related work.

\section{Risk measures and additive consistency}\label{riskmeasures}
Quantifying risk is an essential yet difficult task. Because of the subjective nature of risk perception,
defining an appropriate measure remains controversial and several approaches have been proposed
each one with its own advantages and limitations. A risk quantification   
attaches a scalar value to each random variable $X:\Omega\to\RR$, where $\Omega$ is a set of 
events endowed with a $\sigma$-algebra $\F$  and a probability measure $\PP$. 
More precisely, a {\em risk measure} is a map $\rho:\X\to\RR$ defined over a {\em prospect space} $\X$ (a linear space of random variables containing the constants, usually a subspace of $L^\infty(\Omega,\F,\PP)$) which satisfies the following postulates:\vspace{-2ex}
\begin{itemize}
\item {\sc normalization:} $\rho(0)=0$,\vspace{-1ex}
\item {\sc monotonicity:}  if $X\leq Y$ almost surely then $\rho(X)\leq\rho(Y)$,\vspace{-1ex}
\item {\sc translation invariance:}  $\rho(X+m)=\rho(X)+m$ for all $m\in\RR$.\vspace{-1ex}
\end{itemize}
Such a map induces a preference relation $X\trieq Y\Leftrightarrow\rho(X)\leq\rho(Y)$ which 
defines a complete order. In this paper prospects are interpreted as costs or disutilities so that 
smaller values are preferred and, against common usage, $X\trieq Y$ is read 
as {\em ``$X$ is preferred to $Y$''}. Naturally, the normal convention applies if $X$ represents 
a utility and larger values are better. We use $X\tri Y$ to denote strict preference and we write $X\sim Y$ 
when simultaneously $X\trieq Y$ and $Y\trieq X$.

The normalization axiom is not restrictive as one can always take $\rho(X)-\rho(0)$ instead of 
$\rho(X)$. Monotonicity has a clear intuitive meaning:  
larger costs convey higher risk. In the context of routing, paths with larger travel times are 
riskier and less preferred. Translation invariance 
is equivalent (under normalization) to requiring simultaneously \vspace{-2ex}
\begin{itemize}
\item {\sc normalization on constants:} $\rho(m)=m$ for all $m\in\RR$.\vspace{-1ex}
\item {\sc translation consistency:}  $\rho(X)\leq\rho(Y)\Rightarrow\rho(X+m)\leq\rho(Y+m)$.\vspace{-2ex}
\end{itemize}
The latter is a plausible  condition stating that preferences between prospects are not 
altered when we add them a constant.  While this postulate is not universally accepted in finance 
(attitudes towards risk might change after receiving a heritage), it seems very likely in the context of route choice
(see \S\ref{routechoice}).
Finally, normalization on constants is also a mild requirement: it suffices to have 
$m\mapsto \rho(m)$ strictly increasing and continuous, since then this function 
has an inverse $\sigma$ and we may substitute $\rho$ by $\sigma\circ\rho$.

The axiomatic approach to risk measures was initiated by \cite[Artzner {\em et al.}]{artzner1} 
who introduced the notion of a {\em coherent risk measure}, namely, a risk 
measure which is also {\em sub-additive} and {\em positively homogeneous}. 
Positive homogeneity translates the notion of scale invariance, while sub-additivity 
captures the idea that a merger of two risks cannot create additional risk. The validity 
of these axioms in finance has been thoroughly debated in the literature.
In the context of route choice these assumptions seem less natural, specially positive 
homogeneity.  A weaker property is {\em convexity} which still supports a useful dual representation 
for risk measures \cite{follmer1,follmer2,follmer3}.

Preferences can also be modeled directly as a preorder, namely
 a reflexive and transitive relation $\trieq$. 
We then say that $\trieq$ is represented by $\rho:\X\to\RR$ if $X\trieq Y\Leftrightarrow \rho(X)\leq \rho(Y)$. 
The following simple result clarifies when both modeling approaches coincide.  
\begin{proposition}\label{propo2}
Suppose $\trieq$ is a preorder that satisfies
\\[1ex]
$\bullet$ {\sc monotonicity:} if $X\leq Y$ almost surely then $X\trieq Y$,
\\[1ex]
$\bullet$ {\sc translation consistency:} if $X\trieq Y$ then $X+m\trieq Y+m$ for $m\in\RR$,
\\[1ex]
$\bullet$ {\sc real ordering:} for $X,Y$ constant we have $X\trieq Y\Leftrightarrow X\leq Y$,
\\[1ex]
$\bullet$ {\sc scalarization:} for each $X\in\X$ there is a unique $\alpha\in\RR$ 
with $X\sim \alpha$.
\\[1ex]
Then the map $X\mapsto\rho(X)=\alpha$ defined by the last condition is a risk measure and
$\trieq$ is represented by $\rho$.
\end{proposition}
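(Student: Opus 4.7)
The plan is to verify the three risk-measure axioms (normalization, monotonicity, translation invariance) plus the representation property directly from the four assumed axioms, using transitivity of the preorder to pass between $X$ and its scalar equivalent, and using the real-ordering axiom to translate preferences between constants into ordinary inequalities.

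First I would confirm that $\rho$ is well-defined: scalarization gives existence and uniqueness of the $\alpha$ with $X\sim\alpha$, so the map is unambiguous. Normalization $\rho(0)=0$ is then immediate because $0\sim 0$ by reflexivity, and uniqueness forces $\rho(0)=0$.

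Next I would handle representation and monotonicity together. Given $X,Y\in\X$, set $\alpha=\rho(X)$ and $\beta=\rho(Y)$, so $X\sim\alpha$ and $Y\sim\beta$. If $X\trieq Y$, transitivity yields $\alpha\trieq X\trieq Y\trieq\beta$, and real ordering gives $\alpha\leq\beta$, i.e.\ $\rho(X)\leq\rho(Y)$. Conversely, if $\alpha\leq\beta$, real ordering gives $\alpha\trieq\beta$, and transitivity yields $X\sim\alpha\trieq\beta\sim Y$, so $X\trieq Y$. This proves the representation $X\trieq Y\Leftrightarrow\rho(X)\leq\rho(Y)$. Monotonicity of $\rho$ then follows: if $X\leq Y$ almost surely, the monotonicity axiom on $\trieq$ gives $X\trieq Y$, and the just-established representation delivers $\rho(X)\leq\rho(Y)$.

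For translation invariance, let $\alpha=\rho(X)$, so $X\sim\alpha$, meaning simultaneously $X\trieq\alpha$ and $\alpha\trieq X$. Applying translation consistency to each of these two preferences with constant $m\in\RR$ gives $X+m\trieq\alpha+m$ and $\alpha+m\trieq X+m$, hence $X+m\sim\alpha+m$. By the uniqueness clause in scalarization, $\rho(X+m)=\alpha+m=\rho(X)+m$.

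I do not anticipate a real obstacle; the proof is essentially a careful bookkeeping exercise. The only place to be slightly attentive is translation invariance, where one must apply translation consistency to both directions of the equivalence $X\sim\alpha$ in order to recover an equivalence $X+m\sim\alpha+m$ (the axiom is stated as a one-way implication on $\trieq$, so it must be invoked twice). Everything else reduces to reading off the axioms.
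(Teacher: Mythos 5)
Your proof is correct and follows essentially the same route as the paper's: derive normalization from reflexivity and scalarization, translation invariance by pushing the equivalence $X\sim\rho(X)$ through translation consistency, monotonicity of $\rho$ from monotonicity of $\trieq$, and the representation via transitivity plus real ordering. Your explicit remark that translation consistency must be invoked in both directions of $X\sim\alpha$ is a detail the paper leaves implicit, but it is the same argument.
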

\proof  Reflexivity of $\trieq$ gives $X\sim X$ so that for $X\equiv m$ constant
we get $\rho(m)=m$ which shows that $\rho$ is normalized on constants. 
Next, by definition we have $X\sim\rho(X)$ and translation consistency gives 
$X+m\sim \rho(X)+m$ so that $\rho(X+m)=\rho(X)+m$ proving the translation invariance 
of $\rho$. Since the monotonicity of $\trieq$ readily implies the monotonicity of $\rho$, it 
follows that $\rho$ is a risk measure. It remains to establish the representation property.
By definition we have $X\sim\rho(X)$ and $Y\sim\rho(Y)$ so that transitivity
gives $X\trieq Y$ iff  $\rho(X)\trieq\rho(Y)$. According to the real ordering axiom,
the latter is equivalent to $\rho(X)\leq\rho(Y)$.
\endproof
As a corollary to Proposition \ref{propo2} the preorder $\trieq$ must be complete, that is, 
all pairs are comparable. This also follows directly from the real ordering and scalarization 
axioms. Scalarization is a non-trivial condition. In section \S\ref{Stres} we will
revise this postulate in the light of the theories of choice.

\subsection{Examples and counterexamples of risk measures}\label{examples}

A first attempt to quantify risk was given in \cite[Markowitz]{markowitz1} by considering  the 
mean-risk functional
$$\rho_\gamma^{var}(X)=\mu(X)+\gamma\sigma^2(X)$$
with $\mu(X)$ the mean of $X$, $\sigma^2(X)$ its variance, and $\gamma>0$ a positive constant. 
Variations of this idea substitute the variance by the standard deviation
$$\rho_ \gamma ^{std}(X)=\mu(X)+\gamma\sigma(X)$$
or other variability measures such as the absolute semi-deviations
\cite{rusz4,ogr1}. While these maps satisfy normalization and translation invariance, they are 
not risk measures since monotonicity might fail: take  $X\sim U[0,1]$ a uniform variable and 
$Y= (1+X)/2$ so that $X\leq Y$ almost surely, yet for $\gamma$ large we have 
$\rho_ \gamma ^{var}(Y)<\rho_ \gamma ^{var}(X)$ and the same for $\rho_ \gamma ^{std}$.

A popular measure is \textit{Value-at-Risk} defined for $p\in(0,1)$ as the percentile
$$\text{VaR}_p(X)=\inf\left\{m\in\RR: \PP(X\leq m)\geq 1-p\right\}.$$
This is a risk measure which is also positively homogeneous, but not convex  nor sub-additive (see \cite{artzner1}).
The best known coherent risk measure is \textit{Average Value-at-Risk},
introduced in \cite{artzner1} and defined for a level $p\in(0,1)$ by
$$\text{AVaR}_p(X)=\frac{1}{p}\int^p_0\text{VaR}_q(X)dq,$$
which also has the following useful dual representation ({\em cf.} \cite{follmer1, follmer2,rocka1,rocka2}) 
$$\text{AVaR}_p(X)=\mbox{$\frac{1}{p}$}\inf_{z\in\RR}\left\{\EE((X-z)_{+})+pz\right\}.$$ 
AVaR is also known by the names of \textit{Conditional Value-at-Risk}, \textit{Tail Value-at-Risk}, and \textit{Expected Shortfall}.
For continuous variables it coincides with the \textit{Tail Conditional Expectation}
$$\text{TCE}_p(X)=\EE(X|X\geq \text{VaR}_p(X)).$$ 
Note that when restricted to normal random variables both 
VaR$_p$ and AVaR$_p$ coincide with $\rho_\gamma ^{std}$
for appropriate corresponding constants $\gamma$. 

A family of convex (but not coherent) risk measures are the entropic measures defined as
({\em cf.} \cite{follmer1,follmer2,follmer3,rusz1})
$$\rho^{ent}_{\beta}(X)=\mbox{$\frac{1}{\beta}$}\ln(\EE(e^{\beta X})).$$
These measures play a central role in our results. They can be 
derived from additive premium principles \cite[Gerber]{gerber1}, as well as from expected 
utilities with constant absolute risk aversion CARA (\cite[Arrow]{arrow1}, \cite[Pratt]{pratt1}). 
The case $\beta>0$ characterizes risk-averse behavior while $\beta<0$ 
corresponds to a risk-prone agent. The limit $\beta\to 0$ gives
$\rho^{ent}_0(X)=\EE(X)$ which reflects risk neutrality, while 
$\beta\to\pm\infty$ yields extreme attitudes toward risk with 
$\rho^{ent}_{\infty}(X)=\esssup X$ and $\rho^{ent}_{-\infty}(X)=\essinf X$. 
In the sequel we only consider finite $\beta$'s and exclude the last two.

\subsection{Risk measures and consistency in route choice}\label{routechoice}
Consider a driver who must choose one among a finite set of routes,
each of which has a random travel time in a suitable prospect space $\X$.  
While a risk-neutral driver may prefer the route with smallest expected time 
(easily computed by any shortest path algorithm), a risk-averse user might be
willing to trade some expected value against increased reliability. 

We assume that the driver preferences $\trieq$ satisfy the axioms in Proposition \ref{propo2}, 
so that route choice is based on a risk measure
$\rho:X\to\RR$. As already mentioned, the scalarization 
postulate is a nontrivial assumption which will be discussed later. 
In contrast, the axioms of monotonicity and real ordering seem 
quite innocuous, while translation consistency is also very plausible in this context. 
Namely, consider the simple network illustrated in figure \ref{fig:fig1}
\begin{figure}[!htb]
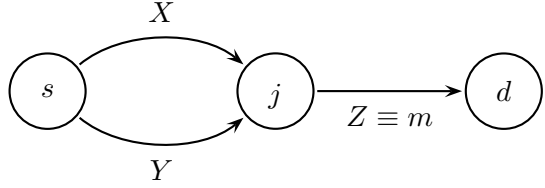

\centering
\psframebox*{%
\rule[-1.2cm]{0pt}{2.4cm}
$\psmatrix[mnode=Circle,radius=5mm,colsep=2.0cm,rowsep=1.5cm,arrowscale=1.5]
[name=1] s &  [name=2] j &  [name=3] d
\ncline[nodesep=1pt]{->}{2}{3}_{Z\equiv m}
\ncarc[nodesep=1pt,arcangle=-40]{->}{1}{2}_{Y}
\ncarc[nodesep=1pt,arcangle=40]{->}{1}{2}^{X}
\endpsmatrix$ }
\caption{Translation invariance and additive consistency}
\label{fig:fig1}
\end{figure}
with two paths from $s$ to $j$ with random  times $X$ and $Y$, followed by a single path from 
$j$ to $d$ with constant time $Z\equiv m$. 
Translation consistency simply requires that a driver  who prefers $X$ to $Y$ for moving from $s$ to 
$j$, should have the same preference when heading towards $d$. 

A stronger consistency property requires the preservation of preferences 
when $Z$ is no longer constant but still
independent from $X$ and $Y$, namely, if $X\trieq Y$ then $X+Z\trieq Y+Z$ for all $Z\perp (X,Y)$.
Intuitively, since the arc $(j,d)$ is compulsory and one must inevitably pass through it, the
decision at $s$ should not depend on $Z$. This seems all the more plausible since, due to the independence,
even if one observes $Z$ this reveals no information that 
could affect the choice between $X$ and $Y$. 
This motivates the following definition.
\begin{definition}[Additive consistency]
A map $\rho:\X\to\RR$ is called {\em additive consistent} if for all $X,Y,Z\in\X$ with $Z\perp (X,Y)$
we have
$$\rho(X)\leq\rho(Y) \ \Rightarrow \ \rho(X+Z)\leq\rho(Y+Z).$$ 
\end{definition}
For risk measures this is equivalent to an apparently stronger requirement of additivity
for sums of independent risks. The proof is elementary.
\begin{lemma}\label{Lema1}
Let $\rho:\X\to\RR$ be a risk measure. Then $\rho$ is additive consistent if and only if 
$\rho(X+Y)=\rho(X)+\rho(Y)$ for all $X,Y\in\X$ with $X\perp Y$. A map satisfying the latter 
is called {\em additive}.
\end{lemma}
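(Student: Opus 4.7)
The ``if'' direction is immediate: assume $\rho(X+Y)=\rho(X)+\rho(Y)$ whenever $X\perp Y$, and let $X,Y,Z\in\X$ with $Z\perp(X,Y)$. Since $Z\perp X$ and $Z\perp Y$, additivity gives $\rho(X+Z)=\rho(X)+\rho(Z)$ and $\rho(Y+Z)=\rho(Y)+\rho(Z)$, so $\rho(X)\leq\rho(Y)$ trivially propagates to $\rho(X+Z)\leq\rho(Y+Z)$.

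For the ``only if'' direction, the plan is to replace $X$ by its scalar ``certainty equivalent'' and exploit the two-sided use of additive consistency. Concretely, let $X,Y\in\X$ with $X\perp Y$ and set $m:=\rho(X)\in\RR$. Translation invariance applied to $\rho(0)=0$ yields normalization on constants, namely $\rho(m)=m=\rho(X)$, so both inequalities $\rho(X)\leq\rho(m)$ and $\rho(m)\leq\rho(X)$ hold.

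Next I would invoke additive consistency with $Z=Y$. The required independence $Y\perp(X,m)$ reduces to $Y\perp X$, since the constant $m$ is independent of every random variable. From $\rho(X)\leq\rho(m)$ one obtains $\rho(X+Y)\leq\rho(m+Y)$, and from $\rho(m)\leq\rho(X)$ one obtains $\rho(m+Y)\leq\rho(X+Y)$. Combining and applying translation invariance to simplify $\rho(m+Y)=m+\rho(Y)=\rho(X)+\rho(Y)$ yields the desired equality
\[
\rho(X+Y)=\rho(m+Y)=\rho(X)+\rho(Y).
\]

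There is really no hard step here; the only point worth highlighting is the device of substituting the constant $m=\rho(X)$ for the random variable $X$, which turns the one-sided implication in the definition of additive consistency into an equality by applying it in both directions. The argument uses only normalization, translation invariance and monotonicity of $\rho$ in a superficial way (through normalization on constants), together with the fact that constants are independent of every $Y\in\X$.
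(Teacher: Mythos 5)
Your proof is correct and follows essentially the same route as the paper: the paper also replaces $X$ by its certainty equivalent $\rho(X)$ (writing $X\sim\rho(X)$), applies additive consistency in both directions with $Z=Y$ to get $X+Y\sim\rho(X)+Y$, and finishes with translation invariance. Your write-up merely makes explicit the two one-sided inequalities and the fact that a constant is independent of everything, both of which the paper leaves implicit.
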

\proof
The {\em ``if''} part is obvious so we just prove the {\em ``only if''}\!. 
Let $X\perp Y$. Since $X\sim\rho(X)$, from additive consistency we get $X+Y\sim \rho(X)+Y$.
Hence $\rho(X+Y)=\rho(\rho(X)+Y)$ and the translation invariance of $\rho$ yields
$\rho(X+Y)=\rho(X)+\rho(Y)$.
\endproof

It is well known that the entropic risk measures $\rho^{ent}_\beta$ are additive and hence 
additive consistent. The counterexample in the Introduction (see figure \ref{fig:fig0}) 
shows that this is not the case for $\rho^{std}_\gamma$. The example was for $\gamma=1$ but
it can be readily adapted to any $\gamma>0$. Thus, in general $\rho^{std}_\gamma$ is not 
additive consistent, and {\em a fortiori} neither VaR$_p$ nor AVaR$_p$ since they coincide
 with $\rho^{std}_\gamma$ for normal variables. In section \S\ref{Stres} we show that, among a wide class of risk 
measures, the entropic ones are the only that are additively consistent.

\noindent{\sc Remark.} The use of normal distributions in the counterexample in the Introduction 
could raise some objections since these are unbounded and have positive mass on the 
negative reals, so they might not represent travel times. However, the example is robust and can be modified to get distributions with 
bounded support on $\RR_+$:  it suffices to shift the variables by a large common 
constant so that the mass on $\RR_-$ becomes negligible, and then truncate to a large 
interval $[0,M]$ and take conditional distributions.

\subsection{Application to risk-averse network equilibrium} 
Additive consistency is a plausible assumption with interesting consequences 
for the computation of risk-minimizing routes and risk-averse network equilibrium.
Consider a network $G=(V,A)$ in which every link $a\in A$ has a random
travel time $\tau_a$ and assume that these variables are independent.
Let $\P$ be the set of paths connecting a given origin $s$ to 
a destination $d$, and for each $p\in\P$ denote $T_p=\sum_{a\in p}\tau_a$ 
the corresponding travel time. Given a risk measure $\rho$ we consider the 
problem of finding a risk-minimizing path
\be\label{PPP}
\min_{p\in\P}~\rho(T_p).
\ee
When $\rho$ is additive the objective function separates
as $\rho(T_p)=\sum_{a\in p}\rho(\tau_a)$ and (\ref{PPP}) reduces to 
a standard shortest path problem with arc lengths $w_a=\rho(\tau_a)$.
This can be efficiently solved using standard algorithms.

Consider now a non-atomic equilibrium problem with 
traffic demands $g_k\geq 0$ for a family of origin-destination 
pairs $(s_k,d_k)_{k\in\K}$. The demands decompose into
path-flows $x_p\geq 0$ so that $g_k=\sum_{p\in\P_k}x_p$ 
where $\P_k$ denotes the set of paths connecting $s_k$ to $d_k$.
The cumulative flow on a link $a\in A$ is then $y_a=\sum_{p\ni a}x_p$
where the sum extends to all paths $p\in\cup_{k\in\K}\P_k$
containing $a$. Suppose that the distribution $\tau_a\sim\F_a(y_a)$ 
depends on the total link flow $y_a$. We may then define a {\em risk-averse 
network equilibrium} as a path-flow vector $x$ which uses only risk-minimizing 
paths, namely, for each OD pair $k\in\K$ and every path $p\in\P_k$ we must have 
$$x_p>0\Rightarrow \rho(T_p)=\min_{r\in\P_k}\rho(T_r).$$
If $\rho$ is additive and the function $\sigma_a(y_a)\triangleq \rho(\tau_a)$ 
increases with $y_a$, this reduces to a standard Wardrop equilibrium 
and equilibria are characterized as the optimal solutions of the convex program
\be\label{riskequilibrium}
\min_{(x,y)\in F}\sum_{a\in A}\int_0^{y_a}\!\!\!\sigma_a(z)\,dz
\ee
where $F$ stands for the set of all feasible flows satisfying flow conservation.

A similar model can be stated in the atomic case with finitely many players.
Each player $i\in I$ choses a path $p_i$ from his origin to his destination
and gets  $\rho(T_{p_i})$ as payoff. Assuming that 
the distribution of $\tau_a\sim \F_a(n_a)$ depends on the number of players that use 
the link $n_a=|\{i\in I:a\in p_i\}|$, and denoting $\sigma_a(n_a)=\rho(\tau_a)$, this 
yields a congestion game which falls in the framework of Rosenthal and admits the potential function
\be\label{pot}
\Phi(p_i:i\in I)=\sum_{a\in A}\sum_{z=0}^{n_a}\sigma_a(z).
\ee
In both the atomic and non-atomic settings above, all drivers were assumed homogeneous with 
respect to their valuation of risk. In the next section we show that additive consistency 
limits the choice to entropic risk measures so that some similarity among users 
might be expected, nevertheless they can still differ in their absolute risk aversion index.
The latter calls for an equilibrium model 
with multiple user classes, for which one can still establish the existence of equilibria 
but a simple variational characterization such as (\ref{riskequilibrium}) or the existence of a potential function 
like (\ref{pot}) seems unlikely.


\section{Theories of choice and additive consistency}\label{Stres}

The scalarization postulate in Proposition \ref{propo2}
is a nontrivial assumption that needs further justification. The theories of choice 
provide sufficient conditions for this property to be satisfied.
In particular, a preorder $\trieq$ on a topological 
space $\X$ has a scalar representation $C:\X\to\RR$ if and only if  
there is a countable dense subset $\D\subset \X$ such that whenever 
$X\tri Y$ one can find $Z\in\D$ with $X\trieq Z\trieq Y$
(see \cite[Theorem 2.6]{follmer3} and references therein).
Unfortunately this is not enough for our purposes
and additional conditions are needed to get equivalence to a constant $X\sim \alpha$. 
This can be achieved when $\X$ is a prospect space of random variables, in which case 
more specific formulas for $C(X)$ can be obtained.

Already  in the 18th century, Daniel Bernoulli \cite{bernoulli} observed that preferences on
prospects could be represented by an expected utility $C(X)=\EE(c(X))$.
Axiomatic approaches for this type of representation were developed among others by 
Kolmogorov \cite{kolmogorov}, Nagumo \cite{nagumo}, de Finetti \cite{definetti}, 
and von Neumann \& Morgenstern \cite{vnm1}. Here we consider a version  given 
by F\"ollmer \& Schied \cite{follmer3}. More recently, alternative representations have 
been obtained under different sets of axioms, including the dual theory of choice
\cite[Yaari]{yaari1} and the rank-dependent utilities \cite[Quiggin]{quiggin1}, 
\cite[Wakker]{wakker1}, \cite[Chateauneuf]{chate1}. 

Throughout this section we consider a preorder  $\trieq$  defined on the whole space \mbox{$\X=L^\infty(\Omega,\F,\PP)$}
where $(\Omega,\F,\PP)$ is a standard atomless probability space.
We denote $\D_b$ the set of probability distributions on $\RR$ with bounded support so that
each $X\in\X$ has a distribution $F_X\in\D_b$ and, by Skorohod's representation theorem
\cite{skorohod}, all distributions in $\D_b$ are obtained in this way.
On $\X$ we consider the convergence for the $L^1$-norm, denoted $X_n\rightarrow X$,
as well as the convergence in distribution: $X_n\stackrel{\D}{\rightarrow} X$ iff  $F_{X_n}$ converges weakly to $F_X$, that 
is $\int_\RR\varphi(x)\,dF_{X_n}(x)\to\int_\RR\varphi(x)\,dF_X(x)$ 
for all bounded continuous functions $\varphi:\RR\to\RR$.

\subsection{Expected utility}\label{exputilities}

According to \cite[Corollary 2.29]{follmer3}, 
a preorder $\trieq$ over $\X$ admits an expected utility representation of the form\footnote{Since 
in our setting $X$ represents a cost, it might be more appropriate to call it {\em expected disutility} or {\em expected cost}, 
but we adhere to the standard terminology.}
$$C(X)=\EE(c(X))=\int_{\RR}c(x)\,dF_X(x),$$
with $c:\RR\to\RR$ strictly increasing and continuous (unique up to a positive affine transformation)
if and only if the following axioms are satisfied\vspace{-2ex}
\begin{description}
\item[(A$_1$)] {\sc law invariance\footnote{Note that in this case $\trieq$ induces a preorder $\preceq$ 
on $\D_b$ by $F_X \preceq F_Y \Leftrightarrow X\trieq Y.$}:} $F_X=F_Y\Rightarrow X\sim Y$. \vspace{-0.5ex}
\item[(A$_2$)] {\sc weak continuity:} the sets  $\{Y\in\X: Y\trieq X\}$ and $\{Y\in\X: X\trieq Y\}$ are closed
for  convergence in distribution.\vspace{-0.5ex}
\item[(A$_3$)] {\sc independence}: if $X\trieq Y$ then 
 $\L(p;X;Z)\trieq \L(p;Y;Z)$ for all $Z\in\X$ and $p\in[0,1]$.
Here $\L(p;X;Z)$ denotes the lottery with distribution given by
$\alpha F_X(x)+(1-\alpha)F_Z(x)$ for all $x\in\RR$.\vspace{-2ex}
\end{description}
This is a general version of the von Neumann and Morgenstern representation result \cite{vnm1}, originally 
stated for lotteries over a finite event space. For further discussions see \cite{rusz5,fishburn1,fishburn2,follmer3}. 

In the context of route choice, expected utility preferences
hold an intuitive appeal. Imagine for instance a fire truck rushing towards 
an emergency. Clearly enough, reaching the destination as quickly as possible is critical,
all the more since the damage caused by fire increases non-linearly with time. 
A route with small expected time but affected by events of high congestion 
might be too risky, and a longer but more reliable route could be a better choice.
Expected utility captures the nonlinear relation between ``time'' and ``cost'', so that
minimizing expected cost seems a reasonable model for the actual behavior of firemen.

The  properties of the utility $c(\cdot)$ are naturally connected to those of $\trieq$.
For instance,  $c(\cdot)$ is convex iff $\trieq$ is \textit{risk-averse} in the sense 
that the expected value of a prospect is always preferred to the prospect itself: $\EE(X)\trieq X$.
Also $c(\cdot)$ is increasing iff  $\trieq$ is 
monotone, and strictly increasing if  $X\tri Y$ whenever $X< Y$ almost surely. 
In this latter case $c(\cdot)$ has an inverse and one can also represent $\trieq$ by 
the so-called {\em certainty equivalent}
$$\rho_c(X)=c^{-1}(\EE(c(X))).$$
Note that taking $\alpha=\rho_c(X)$ we have $C(\alpha)=c(\alpha)=C(X)$
so that $X\sim\alpha$ and the scalarization postulate in Proposition \ref{propo2} holds true.
Moreove, note that if a preorder $\trieq$ is induced by a map $\rho:\X\to\RR$ and  satisfies (A$_1$)-(A$_3$), 
then $\rho$ is necessarily of the form $\rho_c$. 
For completeness we state this explicitly.
\begin{corollary} Let $\rho:\X\to\RR$ be such that it satisfies 
\\[1ex]
$\bullet$ {\sc law invariance:} $F_X=F_Y\Rightarrow \rho(X)=\rho(Y)$,
\\[1ex]
$\bullet$ {\sc normalization on constants:} $\rho(m)=m$ for all $m\in\RR$,
\\[1ex]
$\bullet$ {\sc strict monotonicity:} if $X<Y$ almost surely then $\rho(X)<\rho(Y)$,
\\[1ex]
$\bullet$ {\sc weak continuity:} if $X_n\stackrel{\D}{\rightarrow} X$ then $\rho(X_n)\to \rho(X)$,
\\[1ex]
$\bullet$ {\sc independence:} if $\rho(X)\leq\rho(Y)$ then $\rho(\L(p;X;Z))\leq\rho(\L(p;Y;Z))$.
\\[2ex]
Then $\rho=\rho_c$ for some $c:\RR\to\RR$ strictly increasing and continuous, and 
unique up to a positive affine transformation.
\end{corollary}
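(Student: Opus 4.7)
The plan is to reduce the statement to the expected-utility representation theorem (Corollary 2.29 of F\"ollmer and Schied) cited just above. First I would introduce the preorder $\trieq$ on $\X$ induced by $\rho$, defined by $X\trieq Y\Leftrightarrow\rho(X)\leq\rho(Y)$; reflexivity and transitivity are immediate from the order on $\RR$. Then I would translate each hypothesis on $\rho$ into the corresponding property of $\trieq$. Law invariance of $\rho$ immediately gives (A$_1$). Weak continuity of $\rho$ (viewed as a functional on distributions) makes the sets $\{Y:\rho(Y)\leq\rho(X)\}$ and $\{Y:\rho(Y)\geq\rho(X)\}$ closed for convergence in distribution, which is exactly (A$_2$). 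The independence hypothesis on $\rho$ is (A$_3$) rewritten through $\trieq$. The remaining two hypotheses, strict monotonicity and normalization on constants, will not be used at this stage but will be invoked in the final identification step.

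With (A$_1$)--(A$_3$) verified, I would invoke Corollary 2.29 of F\"ollmer and Schied to obtain a strictly increasing continuous function $c:\RR\to\RR$, unique up to positive affine transformation, such that $\trieq$ is represented by $C(X)=\EE(c(X))$. Since $c$ is continuous and strictly increasing, it has a continuous inverse $c^{-1}$, and the certainty equivalent
\[
\rho_c(X)=c^{-1}(\EE(c(X)))
\]
is a well-defined map on $\X$ that also represents $\trieq$, since $c^{-1}$ is monotone.

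The final step is to identify $\rho$ with $\rho_c$ pointwise. Fix $X\in\X$ and set $\alpha=\rho_c(X)$. Because $\rho_c(\alpha)=c^{-1}(c(\alpha))=\alpha=\rho_c(X)$, the common preorder gives $X\sim \alpha$, which translates back to $\rho(X)=\rho(\alpha)$. By normalization on constants $\rho(\alpha)=\alpha$, hence $\rho(X)=\alpha=\rho_c(X)$. Uniqueness of $c$ up to a positive affine transformation is inherited directly from the cited theorem, since $\rho_c$ and the representation $C$ determine each other through $c$.

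The main obstacle is a matter of bookkeeping rather than substance: one has to be careful that the axioms (A$_1$)--(A$_3$), stated above for a preorder, are correctly recovered from the functional hypotheses on $\rho$, and that strict monotonicity of $\rho$ is used implicitly to ensure the representation is non-degenerate enough that the resulting $c$ is genuinely strictly increasing (so that $c^{-1}$ exists and $\rho_c$ is well defined). Apart from this verification, the proof is a direct application of the representation theorem, with normalization on constants being precisely the hook that pins $\rho$ down to $\rho_c$ rather than merely to some increasing transformation of it.
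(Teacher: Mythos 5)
Your proof is correct and takes essentially the same route as the paper's: pass to the induced preorder, invoke the expected-utility representation to obtain $\rho_c$, and then use normalization on constants together with the equivalence $X\sim\rho_c(X)$ (the paper writes the symmetric fact $X\sim\rho(X)$) to identify $\rho$ with $\rho_c$ pointwise. Your remark that strict monotonicity is what guarantees $c$ is genuinely strictly increasing matches the paper's own discussion preceding the corollary.
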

\proof Let us consider the induced order $X\trieq Y\Leftrightarrow \rho(X)\leq\rho(Y)$.
The assumptions imply that $\trieq$ is represented by an expected utility map $\rho_c$. 
Normalization on constants gives $\rho(X)=\rho(\rho(X))$ from which we
deduce $X\sim\rho(X)$ and therefore $\rho_c(X)=\rho_c(\rho(X))=\rho(X)$.
\endproof

In general $\rho_c$ is not a risk measure since translation invariance may fail. 
We show next that this only holds for the entropic risk measures. 
This result goes back to \cite[Gerber]{gerber1} where it was proved under the stronger 
condition of additivity of $\rho_c$ and assuming $c(\cdot)$ concave 
non-decreasing and twice differentiable.
Under translation invariance, but still assuming regularity of $c(\cdot)$, the result was 
proved in \cite[Luan]{luan1} (see also \cite[Heilpern]{heil1}).
Our proof below relies exclusively on continuity and monotonicity. 
Regularity of $c(\cdot)$ as well as additivity of $\rho_c$ are obtained as a consequence.

\begin{theorem}\label{theorem: vnm2}
The only translation invariant maps of the form $\rho_c$ with 
$c:\RR\to\RR$ strictly increasing and continuous, are the entropic risk measures
$$\rho^{ent}_{\beta}(X)=\left\{\begin{matrix} \frac{1}{\beta}\ln(\EE(e^{\beta X}))& \text{if} \ \beta\neq0\\[1ex]
\EE(X) & \text{if} \ \beta=0.\end{matrix}\right.$$
\end{theorem}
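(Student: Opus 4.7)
My approach would be to extract the functional equation satisfied by $c$ using the simplest possible prospects, namely symmetric two-point random variables, and then solve it by standard Cauchy-type arguments.

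First, for arbitrary $a,b\in\RR$ I would take $X$ supported on $\{a,b\}$ with $\PP(X=a)=\PP(X=b)=\frac{1}{2}$ (which exists because the probability space is atomless). Translation invariance of $\rho_c$ then yields, after applying $c$ to both sides,
\[
\frac{c(a+m)+c(b+m)}{2}=c\!\left(c^{-1}\!\left(\frac{c(a)+c(b)}{2}\right)+m\right).
\]
Setting $I=c(\RR)$ and defining $\phi_m:I\to\RR$ by $\phi_m(y)=c(c^{-1}(y)+m)$, this equality says precisely that $\phi_m$ is midpoint preserving on $I$. Since $\phi_m$ is continuous (composition of continuous maps) on the interval $I$, Jensen's functional equation forces $\phi_m$ to be affine: $\phi_m(y)=A(m)y+B(m)$. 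Unfolding the substitution yields the basic functional identity
\[
c(x+m)=A(m)\,c(x)+B(m)\qquad\forall x,m\in\RR.
\]

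Next I would reduce this to a single equation on $\tilde c(x):=c(x)-c(0)$. Setting $x=0$ gives $B(m)=c(m)-A(m)c(0)$, whence $\tilde c(x+m)=A(m)\tilde c(x)+\tilde c(m)$. Exchanging the roles of $x$ and $m$ and subtracting gives $(A(m)-1)\tilde c(x)=(A(x)-1)\tilde c(m)$, so because $\tilde c$ is strictly monotone with $\tilde c(0)=0$, there is a constant $\beta\in\RR$ with $A(m)=1+\beta\tilde c(m)$. Substituting back, $\tilde c$ must satisfy
\[
\tilde c(x+m)=\tilde c(x)+\tilde c(m)+\beta\,\tilde c(x)\tilde c(m).
\]

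In the final step I would split on whether $\beta=0$. If $\beta=0$ this is Cauchy's equation, whose continuous solutions are linear, so $c$ is affine and $\rho_c=\EE$, which corresponds to $\rho^{ent}_0$. If $\beta\neq 0$, the auxiliary function $u(x):=1+\beta\tilde c(x)$ satisfies the exponential Cauchy equation $u(x+m)=u(x)u(m)$ with $u(0)=1$; by continuity $u(x)=e^{\lambda x}$ for some $\lambda\in\RR$, which yields $c(x)=c(0)+(e^{\lambda x}-1)/\beta$. Since $\rho_c$ is invariant under positive affine transformations of $c$, this utility is equivalent to $c(x)=e^{\lambda x}$, and a direct computation gives $\rho_c(X)=\frac{1}{\lambda}\ln\EE(e^{\lambda X})=\rho^{ent}_\lambda(X)$. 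Strict monotonicity of $c$ together with the sign of $\beta$ pins down the sign of $\lambda$ consistently. The main technical obstacle is the first reduction: justifying carefully that $\phi_m$ is well-defined on an interval and that continuity upgrades midpoint-affinity to full affinity; once this is in place, everything else is a textbook manipulation of Pexider/Cauchy equations.
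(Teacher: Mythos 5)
Your proof is correct, and it takes a genuinely different route from the paper's. Both arguments extract a functional equation for $c$ from two-point prospects, but the methods then diverge. The paper tests translation invariance on Bernoulli lotteries $zB_p$ for \emph{general} $p$, arrives at the identity $c(\varepsilon)[c(z+m)-c(m)]=c(z)[c(m+\varepsilon)-c(m)]$ with $\varepsilon=c^{-1}(pc(z))$, and then uses Lebesgue's theorem on a.e.\ differentiability of monotone functions together with limits $p\to 0^{+}$ to bootstrap differentiability of $c$ everywhere and reach the ODE $c'=1+\beta c$. You use only symmetric fifty--fifty lotteries, recognize the resulting identity as Jensen's functional equation for $\phi_m(y)=c(c^{-1}(y)+m)$ on the interval $I=c(\RR)$ (continuity on an interval upgrades midpoint-affinity to affinity), and obtain the Pexider-type relation $c(x+m)=A(m)c(x)+B(m)$, after which the symmetrization trick and the additive/multiplicative Cauchy equations finish the job. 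What each approach buys: the paper's route makes the smoothness of $c$ an explicit by-product (which the authors emphasize, since earlier proofs \emph{assumed} regularity), while yours reaches the same classification with no measure-theoretic input at all and with the smallest possible family of test prospects; both avoid a priori regularity beyond continuity and strict monotonicity. One small point to tidy in your last step: when $\beta<0$ (forcing $\lambda<0$ by strict monotonicity), the utility $c(0)+(e^{\lambda x}-1)/\beta$ is a positive affine transform of $-e^{\lambda x}$ rather than of $e^{\lambda x}$; the certainty equivalent is nonetheless $\frac{1}{\lambda}\ln\EE(e^{\lambda X})=\rho^{ent}_{\lambda}(X)$ in both sign cases, so the conclusion is unaffected.
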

\begin{proof}[Proof] Since $\rho_c$ does not change under affine transformations of $c(\cdot)$,  we may
assume $c(0)=0$. The translation invariance of $\rho_c$ gives 
$$c^{-1}\left(\EE(c(X+m))\right)=c^{-1}\left(\EE(c(X))\right)+m.$$
Take $X=zB_p$ with $z\in\RR$ and $B_p$ a Bernoulli variable with parameter $p$. 
Developing the left and right hand sides, and using the fact that $c(0)=0$, this equality becomes
$$c^{-1}\left(pc(z+m)+(1-p)c(m)\right)=c^{-1}\left(pc(z)\right)+m.$$
Defining $\varepsilon=c^{-1}(pc(z))$ this can be rewritten as
\be\label{eq:cost2} c(\varepsilon)[c(z+m)-c(m)]=c(z)[c(m+\varepsilon)-c(m)].\ee
Now, since $c(\cdot)$ is increasing it is differentiable almost everywhere.
Take any point $m$ at which $c'(m)$ exists. By considering alternately $z>0$ and $z<0$ with $p\to 0^+$
we have respectively $\varepsilon\to 0^+$ and $\varepsilon\to 0^-$.
Dividing (\ref{eq:cost2}) by $\varepsilon$ and noting that $c(z)\neq 0$ and 
$[c(z+m)-c(m)]\neq 0$, it follows that the lateral derivatives of $c(\cdot)$ at 0 exist 
and coincide, and moreover we have
\be\label{eq:cost3} c'(0)[c(z+m)-c(m)]=c(z)c'(m).\ee
Now that we know that $c'(0)$ exists, we may reuse (\ref{eq:cost2}) and apply a similar argument
at an arbitrary point $m$ to deduce that $c'(m)$ exists everywhere and satisfies (\ref{eq:cost3}). 
Moreover, since $c(\cdot)$ is strictly increasing the mean value theorem
implies that $c'(m)>0$ at some point $m$, and (\ref{eq:cost3}) yields $c'(0)>0$.
Using an affine transformation (which does not affect $\rho_c$) we may assume $c'(0)=1$
and then rearranging (\ref{eq:cost3}) we get  
\be \label{eq:cost4} c(z+m)-c(z)=c(m)+[c'(m)-1]c(z).\ee
Dividing by $m>0$ and letting it to 0 it follows that $[c'(m)\!-\!1]/m$ has a limit, which we denote by $\beta$, 
and $c(\cdot)$ satisfies the differential equation $$c'(z)=1+\beta c(z).$$ 
This has a unique continuous solution with $c(0)=0$, namely  $c(x)=x$ if $\beta=0$
and $c(x)=[e^{\beta x}-1]/\beta$ otherwise. The conclusion follows.
\end{proof}


\subsubsection{The independence axiom and Allais' paradox}
Expected utility theory has not been without critics, mainly focusing
on the independence axiom. The paradoxes of Allais \cite{allais1} and Ellsberg 
\cite{ellsberg1} show specific contexts in which the independence axiom is violated
and agents do not behave consistently with the predictions of this theory. Further 
empirical evidence has been provided by Kahneman and Tversky \cite{tversky1}. 
\begin{figure}[!h]
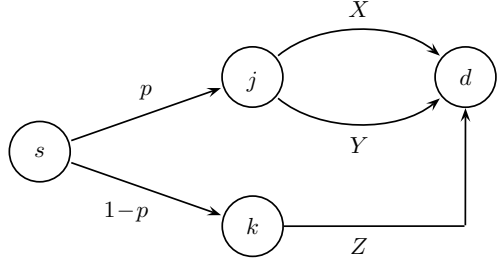

\centering
\scalebox{0.8}{
\psframebox*{%
\rule[0cm]{0pt}{3.5cm}
$\psmatrix[mnode=Circle,radius=5mm,colsep=2.5cm,rowsep=0.2cm,arrowscale=1.5]
  & [name=2] j &  [name=3] d\\
 [name=1] s  \\
& [name=4] k & [mnode=Circle,radius=0mm,name=5] 
\ncline[nodesep=1pt]{->}{1}{2}^{p}
\ncline[nodesep=1pt]{->}{1}{4}_{\hspace{-4ex}1\!-\!p}
\ncarc[nodesep=1pt,arcangle=-40]{->}{2}{3}_{Y}
\ncarc[nodesep=1pt,arcangle=40]{->}{2}{3}^{X}
\ncline[nodesep=0pt]{-}{4}{5}_{Z}
\ncline[nodesep=0pt]{->}{5}{3}
\endpsmatrix$ }
}
\caption{The independence axiom and Allais' paradox.}
\label{fig:fig3}
\end{figure}

To interpret the independence axiom,
imagine a driver who has two options $X,Y$ to travel from $j$ to $d$ of which
he prefers $X$ (see figure \ref{fig:fig3}). Suppose now that he is actually at a point $s$ 
on the other side of a river, and to reach the intermediate node $j$ he must first cross a bridge which 
is open with probability $p$, and else take a long detour $Z$ to the destination $d$. 
Thus, the driver faces a choice between the lotteries $\L(p;X;Z)$ and $\L(p;Y;Z)$. The 
independence axiom postulates that the first should be preferred.
While this seems a reasonable assumption in the route choice setting, Allais observed that 
it may fail in other contexts. Specifically, he considered
$$X=50, \ Y=\left\{\begin{matrix}35& \text{with probability} \ 0.8\\100& \text{with probability} \ 0.2\end{matrix}\right., \ Z=100.$$
and noted that while most people prefer $X$ to $Y$, for $p=0.25$ they tend to choose $\L(p;Y;Z)$ over $\L(p;X;Z)$ where
\begin{align*}
\L(p;X;Z)=&\left\{\begin{matrix}50& \text{with probability} \ 0.25,\\100& \text{with probability} \ 0.75,\end{matrix}\right.\\
\L(p;Y;Z)=&\left\{\begin{matrix}35& \text{with probability} \ 0.2,\\100& \text{with probability} \ 0.8.\end{matrix}\right.
\end{align*}
This points to a potential incongruence between the predictions based on the independence axiom 
and the actual choices made by agents. However, it is also true that {\em ``context matters''} and 
decisions depend not only on the way the choice is formulated but even on the form in which 
information is communicated and processed. The lotteries above do not describe the route 
choice accurately since they obscure the fact that here we face a two-stage decision process with 
the possibility of recourse: the choice between $X$ or $Y$ --- and $Z$ --- can be postponed until 
we know whether the bridge is open. 
Stated in this way the observed inconsistency might disappear, though this should be 
contrasted with the actual choices made by drivers. 
In any case, Allais' paradox and other empirical violations of the postulates of expected utility 
theory have motivated alternative theories of choice. We consider two of them in the next subsections.

\subsection{Dual theory of choice}\label{dualchoice}
While expected utility introduces risk-aversion by magnifying the effects of
bad outcomes through a nonlinear transformation of the cost $c(X)$, Yaari's 
dual theory of choice \cite{yaari1} uses the idea that
a risk-averse agent tends to overstate the probability of bad 
outcomes. 
An agent is then characterized by a continuous nondecreasing 
{\em distortion map} $h:[0,1]\to[0,1]$ with $h(0)=0$ and $h(1)=1$, so that the probability $\PP(X>x)$ is 
distorted as $h(\PP(X>x))$. Risk-aversion corresponds to $h(x)\geq x$ for all 
$x\in[0,1]$, while a risk-prone agent satisfies the reverse inequality.

The function $x\mapsto h(\PP(X>x))$ is a decumulative distribution so we may 
find a random variable $X^h$ such that $\PP(X^h>x)=h(\PP(X>x))$, and we 
may describe the agent's preferences $\trieq$ by the functional
\be\label{drm1}
\rho^h(X)=\EE(X^h)
\ee
or more explicitely in terms of the distribution of $X$ 
\be\label{drm2}
\rho^h(X)=\int^0_{-\infty}\!\![h(\PP(X\!>\!x))\!-\!1]dx+\int^{\infty}_0\!\!h(\PP(X\!>\!x))dx.
\ee
This is a law invariant risk measure which is also
positively homogeneous and normalized on constants. It is called a {\em distortion risk measure}.
In particular it is always translation invariant as opposed to the expected utility 
maps $\rho_c$.

A characterization of the preferences that can be represented in this form
is given in \cite{yaari1}. Namely, assuming that all prospects satisfy $X(\omega)\in[0,1]$ 
almost surely, a preorder $\trieq$ on $\X$ can be characterized by a distortion risk 
measure $\rho^h$ if and only if it is law invariant 
{\bf (A$_1$)} and satisfies\vspace{-2ex}
\begin{itemize}
  \item[{\bf (A$^*_2$)}] {\sc $L^1$-continuity:} the sets $\left\{Y\in\X: Y\trieq X\right\}$ and $\left\{Y\in\X: X\trieq Y\right\}$ are closed for
  convergence in the $L^1$-norm.\vspace{-1ex}
  \item[{\bf (A$^*_3$)}] {\sc dual independence:} if $X,Y,Z\in\X$ are pairwise comonotone and $X\trieq Y$ then
  $\alpha X+(1\!-\!\alpha)Z\trieq \alpha Y+(1\!-\!\alpha)Z$ for all $\alpha\in[0,1]$. \vspace{-1ex}
\item[{\bf (A$^*_4$)}] {\sc monotonicity under first-order stochastic dominance:} if $F_X(t)\geq F_Y(t)$ 
for all $t\in\RR$ then $X\trieq Y$.\vspace{-2ex}
\end{itemize}
The main difference with expected utility theory is the substitution of independence by 
dual independence which uses the concept of {\em comonotonicity}. For our purposes it suffices 
to say that $X$ and $Y$ are comonotone iff there is a third variable $U$ and non-decreasing maps $f$ and 
$g$ such that $X=f(U)$ and $Y=g(U)$. An alternative set of axioms which ensure a representation
by distortion risk measures is given in \cite{rusz5} by considering a prospect space $\X$ of bounded 
random variables on a standard atomless probability space and continuity for  the $L^\infty$ norm.

Although the maps $\rho^h$ are always translation invariant, they may fail to be additively
consistent. As we show next the latter is a stringent condition which is only satisfied for
$h(x)=x$ in which case $\rho^h(X)=\EE(X)$. For $h$ concave and twice differentiable 
this result was established in \cite[Luan]{luan1} (see also \cite[Heilpern]{heil1} and \cite[Goovaerts {\em et al.}]{goovaerts1}).
Our proof does not require any {\em a priori} regularity on $h$ beyond continuity and 
monotonicity. It exploits the following elementary fact.
\begin{lemma}\label{lemma: calc}
Let $h:[0,1]\to[0,1]$ be continuous and suppose that for some $0<p<1$
the limit $L=\lim_{q\to0^+}(h(q)-h(pq))/q$ exists. Then $h$ is right differentiable at $0$ with
$h'_+(0)=L/(1\!-\!p)$.
\end{lemma}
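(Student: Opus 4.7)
The plan is to apply the hypothesis iteratively along the geometric sequence $q, pq, p^2q,\ldots$ and telescope. Since $h$ is continuous at $0$ and $p^n q\to 0$ as $n\to\infty$, we have the exact identity
\[
h(q)-h(0)\;=\;\sum_{n=0}^{\infty}\bigl[h(p^n q)-h(p^{n+1}q)\bigr],
\]
in which each increment behaves asymptotically like $L\cdot p^n q$. Formally summing the geometric series $L\sum_{n\ge 0}p^n q=Lq/(1-p)$ already predicts the answer; the work is in making the limit exchange rigorous.

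The cleanest way is an $\varepsilon$--$\delta$ argument. Given $\varepsilon>0$, choose $\delta>0$ so that
\[
(L-\varepsilon)\,u\;\le\;h(u)-h(pu)\;\le\;(L+\varepsilon)\,u\qquad\text{for all }0<u\le \delta.
\]
For any $q\in(0,\delta]$ the entire orbit $\{p^n q\}_{n\ge 0}$ still lies in $(0,\delta]$, so I can apply this bound with $u=p^n q$ for each $n$. Summing from $n=0$ to $N$ telescopes to
\[
(L-\varepsilon)\,\tfrac{1-p^{N+1}}{1-p}\,q\;\le\;h(q)-h(p^{N+1}q)\;\le\;(L+\varepsilon)\,\tfrac{1-p^{N+1}}{1-p}\,q,
\]
and letting $N\to\infty$ uses only continuity of $h$ at $0$ to replace $h(p^{N+1}q)$ by $h(0)$. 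Dividing by $q$ and letting $\varepsilon\to 0^+$ then yields $\lim_{q\to 0^+}[h(q)-h(0)]/q=L/(1-p)$, which is exactly the claimed $h'_+(0)$.

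The main pitfall to anticipate is that a direct dominated-convergence approach would require a uniform bound on the ratios $[h(u)-h(pu)]/u$ over \emph{all} $u\in(0,1]$, which the hypothesis does not supply. The telescoping argument sidesteps this: because $0<p<1$ the whole orbit contracts into the neighbourhood of $0$ where the asymptotic hypothesis already gives uniform control, so no regularity of $h$ beyond continuity is ever invoked --- in keeping with the spirit of Theorem~\ref{theorem: vnm2} where the authors also dispense with the smoothness assumptions of earlier works.
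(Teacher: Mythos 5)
Your argument is correct and is essentially the paper's own proof: the same $\varepsilon$--$\delta$ bound on $[h(u)-h(pu)]/u$ near $0$, the same telescoping over the geometric orbit $p^n q$, and the same use of continuity of $h$ at $0$ to identify the tail with $h(0)$. The only difference is presentational (you sum to a finite $N$ first and then pass to the limit, while the paper sums the infinite telescoping series directly), so there is nothing to change.
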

\begin{proof}[Proof]
Take $\varepsilon>0$ and choose $\delta>0$ so that for all $q\in(0,\delta)$ we have
$$L-\varepsilon\leq\frac{h(q)-h(pq)}{q}\leq L+\varepsilon.$$
For each $x\in(0,\delta)$ we may take $q=p^jx$ in order to get 
$$(L-\varepsilon)p^j\leq\frac{h(p^jx)-h(p^{j+1}x)}{x}\leq (L+\varepsilon)p^j.$$
Summing over all $j\geq0$ we get a telescopic series that simplifies to
$$\frac{L-\varepsilon}{1-p}\leq\frac{h(x)-h(0)}{x}\leq \frac{L+\varepsilon}{1-p}.$$
from which the conclusion follows since $\varepsilon$ was arbitrary.
\end{proof}
\vspace{1ex}

\begin{theorem}\label{theorem: yaari2}
The only distortion risk measure $\rho^h$ that is additive consistent is $\rho^h(X)=\EE(X)$ 
which corresponds to $h(x)=x$.
\end{theorem}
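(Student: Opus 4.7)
The strategy is to convert additive consistency into a functional equation on $h$ and then exploit it using Lemma~\ref{lemma: calc}. Since $\rho^h$ is a risk measure, Lemma~\ref{Lema1} promotes additive consistency to full additivity under independence: $\rho^h(X+Y)=\rho^h(X)+\rho^h(Y)$ whenever $X\perp Y$. I would test this on independent Bernoulli variables $X=B_p$ and $Y=B_q$. Direct evaluation of formula (\ref{drm2}) gives $\rho^h(B_r)=h(r)$, while $X+Y$ exceeds the level $t$ with probability $p+q-pq$ on $[0,1)$ and with probability $pq$ on $[1,2)$, yielding $\rho^h(X+Y)=h(p+q-pq)+h(pq)$. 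Additivity then produces the key functional equation
\[
h(p+q-pq)+h(pq)=h(p)+h(q),\qquad p,q\in[0,1]. \qquad(\star)
\]

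Next I would extract right-differentiability of $h$ at $0$ via Lemma~\ref{lemma: calc}. Since $h$ is monotone, it is differentiable at some $p_0\in(0,1)$. Rewriting $(\star)$ with $p=p_0$ as $h(p_0+q(1-p_0))-h(p_0)=h(q)-h(p_0q)$, dividing by $q$ and letting $q\to 0^+$, the left side converges to $h'(p_0)(1-p_0)$, so the right side also has a limit $L=h'(p_0)(1-p_0)$. Lemma~\ref{lemma: calc} then yields $h'_+(0)=L/(1-p_0)=h'(p_0)$.

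With $h'_+(0)$ now secured, I would bootstrap to every $p\in[0,1)$. Fixing such $p$ and treating $(\star)$ the same way, the right-hand side $[h(q)-h(pq)]/q$ can be split as $h(q)/q-p\,h(pq)/(pq)$ (trivially when $p=0$), and both fractions tend to $h'_+(0)$ as $q\to 0^+$, so the limit is $h'_+(0)(1-p)$. The left-hand side equals $(1-p)\cdot[h(p+q(1-p))-h(p)]/[q(1-p)]$, hence its limit exists and is $(1-p)h'_+(p)$. Setting $c:=h'_+(0)$, we conclude $h'_+(p)=c$ for every $p\in[0,1)$. Continuity of $h$ together with constant right derivative $c$ on $[0,1)$ then forces $h(x)=cx$, and the normalizations $h(0)=0$, $h(1)=1$ pin down $c=1$, giving $h(x)=x$ and $\rho^h(X)=\EE(X)$.

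The most delicate point is the final passage from ``constant right derivative'' to ``linear,'' since $h$ is only a priori continuous and monotone. The trick is that Lemma~\ref{lemma: calc} is tailor-made to bypass any a priori smoothness: it upgrades a single-point limit on differences into an actual one-sided derivative at $0$. Once that derivative exists everywhere on $[0,1)$, the conclusion rides on the classical fact that a continuous function whose right derivative is identically zero on an interval must be constant (apply the standard result that a continuous function with nonnegative right derivative is nondecreasing to both $g(x)=h(x)-cx$ and $-g$).
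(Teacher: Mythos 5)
Your proof is correct and follows essentially the same route as the paper: the same Bernoulli test yielding the functional equation $h(p+q-pq)+h(pq)=h(p)+h(q)$, the same use of Lemma~\ref{lemma: calc} at a point of differentiability of the monotone $h$ to obtain $h'_+(0)$, and the same bootstrap to a constant right derivative on $[0,1)$. The only (immaterial) difference is the last step: the paper passes from bounded right derivative to Lipschitz and absolute continuity and then integrates, whereas you invoke the classical monotonicity criterion for one-sided derivatives applied to $h(x)-cx$; both are standard and valid.
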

\begin{proof}[Proof]
Take $B_p$ and $B_q$ independent Bernoullis with success probabilities $p,q\in[0,1]$.
From Lemma \ref{Lema1} we know that $\rho^h$ is additive so that
\be\label{sumita}
\rho_h(B_p+B_q)=\rho_h(B_p)+\rho_h(B_q).
\ee
Denoting $\bar p=1\!-\!p$ and $\bar q=1\!-\!q$ we have
\be\label{pcasos}
\PP(B_p+B_q>x)=\left\{\begin{array}{cl}
1 & \mbox{if } x\leq 0\\
1-\bar p\bar q& \mbox{if } 0\leq x<1\\
pq & \mbox{if } 1\leq x< 2\\[0.5ex]
0 & \mbox{if }  x\geq 2
\end{array}
\right.
\ee
and then using (\ref{drm2}) we find $\rho_h(B_p+B_q)=h(1\!-\!\bar{p}\bar{q})+h(pq)$.
Similarly we get $\rho_h(B_p)=h({}p)$ and $\rho(B_q)=h(q)$ so that (\ref{sumita}) becomes
\be\label{efin} h(1-\bar{p}\bar{q})+h(pq)=h(p)+h(q)\ee
which can also be written as
\be\label{func2}
h(p+q(1\!-\!p))-h(p{})=h(q)-h(pq).
\ee
Since $h$ is monotone we can find $\tilde p\in(0,1)$ such that $h'(\tilde p)$ exists,
so that (\ref{func2}) implies
$$\lim_{q\to 0^+}[h(q)-h(\tilde pq)]/q=h'(\tilde p)(1\!-\!\tilde p)$$
and then Lemma \ref{lemma: calc} gives $h'_+(0)=h'(\tilde p)$.
Using this fact and dividing (\ref{func2}) by $q(1\!-\!p)$ with $q\to 0^+$, it then follows that
$h$ has a right derivative at each point $p\in[0,1)$ and in fact $h'_+(p{})=h'_+(0)$
is constant. 
It follows that $h$ is Lipschitz continuous and then absolutely continuous so that 
it can be recovered by integrating its derivative.
Hence $h$ is affine and since
$h(0)=0$ and $h(1)=1$ we conclude that $h(\cdot)$ must be the identity map.
\end{proof}

\subsection{Rank-dependent expected utilities}\label{rankutilities}
Expected utility theory and the dual theory of choice are complementary and can be 
combined by considering preference functionals of the form
$$\rho_c^h(X)=c^{-1}(\EE(c(X^h)))=c^{-1}(\EE(c(X)^h))$$
where $c$ is a utility function and $h$ is a distortion map. 
More explicitly
\be\label{drm3}
\rho_c^h(X)=c^{-1}\!\left(\mbox{$\int^0_{-\infty}[h(\PP(c(X)\!>\!x))\!-\!1]dx+\int^{\infty}_0\!h(\PP(c(X)\!>\!x))dx$}\right)\!.
\ee
Note that $\rho_c^h$ does not change under affine transformations of $c(\cdot)$, so we may assume $c(0)=0$.
For $h(x)=x$ we recover
expected utilities, while $c(x)=x$ gives the distortion risk measures.
The functionals $\rho_c^h$ are called {\em rank-dependent expected utilities} and have been 
considered by several authors including Quiggin \cite{quiggin1}, Wakker \cite{wakker1}, and
 Chateauneuf \cite{chate1}, who provide axiomatic characterizations of the 
preorders $\trieq$ that can be represented in this form.  
Note that $\rho_c^h$ is normalized on constants but, just as for expected utilities, they need not 
be translation invariant nor additive consistent. The next result characterizes
when these properties hold. This result was proved in \cite[Luan]{luan1} assuming
$c(\cdot)$ and $h(\cdot)$ twice differentiable and increasing, with $h$ concave and $c$ convex.
An alternative proof was given in \cite[Goovaerts {\em et al.}]{goovaerts1} under the same hypothesis
but assuming in addition that $c(\cdot)$ admits a McLaurin expansion. Our proof rests on
the techniques developed in the previous sections and avoids such {\em a priori} regularity
which is however obtained as a consequence.
\begin{theorem}\label{theorem: wakker} 
A rank dependent expected utility $\rho_c^h$ is translation invariant iff $c(\cdot)$ is an exponential function
or the identity.  Moreover, the only $\rho_c^h$ which are additive consistent are the entropic 
risk measures: $h$ is the identity and $c$ is either an exponential function or the identity.
\end{theorem}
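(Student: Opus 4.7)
The plan is to reduce both parts of the statement to computations on Bernoulli prospects, exploiting the two elementary identities $(X+m)^h\stackrel{d}{=}X^h+m$ and $(zB_p)^h\stackrel{d}{=}zB_{h(p)}$, which follow at once from $\PP(Y^h>y)=h(\PP(Y>y))$. I will first characterize translation invariance; then, armed with that, I will extract additivity on independent sums via Lemma~\ref{Lema1} and use a second round of Bernoulli computations to pin down $h$.

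For the translation-invariance characterization, the ``if'' direction is easy algebra. If $c(x)=x$, then $\rho_c^h(X)=\EE(X^h)$ is translation invariant by $(X+m)^h\stackrel{d}{=}X^h+m$. If $c(x)=(e^{\beta x}-1)/\beta$, the functional identity $c(x+m)=e^{\beta m}c(x)+c(m)$ together with the same distributional fact gives $\EE(c((X+m)^h))=e^{\beta m}\EE(c(X^h))+c(m)$, and applying $c^{-1}$ yields $\rho_c^h(X+m)=\rho_c^h(X)+m$. For the converse, I test translation invariance at $X=zB_p$ with $c(0)=0$: direct computation of $\rho_c^h(zB_p)$ and $\rho_c^h(zB_p+m)$, together with the substitutions $q=h(p)$ and $\varepsilon=c^{-1}(qc(z))$, reproduces verbatim equation~(\ref{eq:cost2}) from Theorem~\ref{theorem: vnm2}:
\[ c(\varepsilon)[c(z+m)-c(m)] = c(z)[c(m+\varepsilon)-c(m)]. \]
Since $h$ is continuous with $h(0)=0$, the parameter $q$ still ranges over a right-neighborhood of $0$ as $p\to 0^+$, so the regularity bootstrap of Theorem~\ref{theorem: vnm2} carries over and forces $c$ to be either the identity or an exponential.

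For additive consistency, first taking $Z$ deterministic gives translation consistency, which combined with the identity $\rho_c^h(m)=m$ yields translation invariance; since $\rho_c^h$ is also monotone under almost-sure order, it is a risk measure and Lemma~\ref{Lema1} upgrades additive consistency to full additivity: $\rho_c^h(X+Y)=\rho_c^h(X)+\rho_c^h(Y)$ for independent $X,Y$. By the previous step $c$ is either the identity or an exponential. If $c$ is the identity, $\rho_c^h=\rho^h$ is a distortion risk measure and Theorem~\ref{theorem: yaari2} forces $h(x)=x$, giving $\rho_c^h=\EE=\rho_0^{ent}$. If $c(x)=(e^{\beta x}-1)/\beta$ with $\beta\ne0$, I apply additivity to $X=aB_p$ and $Y=bB_q$ with $a,b>0$ and independent Bernoullis. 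Writing $\rho_c^h(X)=\beta^{-1}\ln\EE(e^{\beta X^h})$, additivity becomes $\EE(e^{\beta(X+Y)^h})=\EE(e^{\beta X^h})\EE(e^{\beta Y^h})$. Computing both sides from the explicit distribution of $(X+Y)^h$ on $\{0,a,b,a+b\}$, this is a polynomial relation of degree one in each of $A=e^{\beta a}$ and $B=e^{\beta b}$. The monomials $1,A,B,AB$ are linearly independent as $(a,b)$ varies in an open set, so matching coefficients yields the two functional equations
\[ h(pq)=h(p)h(q), \qquad h(p+q-pq)=h(p)+h(q)-h(p)h(q). \]

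From the multiplicative equation, continuity of $h$ together with $h(0)=0$ and $h(1)=1$ forces $h(p)=p^\gamma$ for some $\gamma>0$ (Cauchy's equation after the change of variable $x=-\ln p$, once one rules out $h\equiv 0$ on a subinterval via the identity $h(pq)=h(p)h(q)$ applied at $p,q$ slightly above the supremum of the vanishing set). Substituting into the second equation and specializing to $p=q$ gives $(2p-p^2)^\gamma=2p^\gamma-p^{2\gamma}$; expanding as $p\to 0^+$, the leading term is $(2^\gamma-2)p^\gamma$, which forces $\gamma=1$ and hence $h(p)=p$. Combined with $c(x)=(e^{\beta x}-1)/\beta$, this shows $\rho_c^h=\rho_\beta^{ent}$. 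The main technical obstacle is the coefficient-matching step for $A$ and $B$: it is clean here because $a$ and $b$ range over open intervals and $1,A,B,AB$ are linearly independent over any such product, so the identity in $(A,B)$ decouples into the four scalar conditions listed above.
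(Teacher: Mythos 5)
Your argument is correct, and for the translation-invariance half it essentially coincides with the paper's: both test $X=zB_p$, recover equation~(\ref{eq:cost2}), and invoke the bootstrap of Theorem~\ref{theorem: vnm2}. One caveat there: the identity $(zB_p)^h\stackrel{d}{=}zB_{h(p)}$ and the substitution $\varepsilon=c^{-1}(h(p)c(z))$ are valid only for $z>0$; for $z<0$ one has $(zB_p)^h\stackrel{d}{=}zB_{1-h(1-p)}$ and $\varepsilon=c^{-1}\bigl((1-h(1-p))c(z)\bigr)<0$, which is what the paper uses to obtain the \emph{left} derivative of $c$ at $0$ -- you should either add this second case or argue that one-sided derivatives suffice to integrate the ODE. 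Your observation that $q=h(p)$ still sweeps a right-neighbourhood of $0$ is correct, and worth keeping explicit, because it rests on surjectivity of the continuous map $h$ (via $h(0)=0$, $h(1)=1$) rather than strict monotonicity. For the additive-consistency half you genuinely diverge from the paper in two places. First, you obtain the pair of functional equations $h(pq)=h(p)h(q)$ and $h(p+q-pq)=h(p)+h(q)-h(p)h(q)$ by taking two distinct scales $a\neq b$ and matching coefficients of $1$, $e^{\beta a}$, $e^{\beta b}$, $e^{\beta(a+b)}$ over an open set of $(a,b)$; the paper takes $a=b=z$ and lets $z$ vary, concluding that the $e^{\beta z}$-coefficient and the constant term vanish separately. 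Both devices yield exactly the same two equations. Second, and more substantially, you solve them through the multiplicative equation: after ruling out a vanishing interval you get $h(p)=p^\gamma$ from Cauchy's equation, and the additive equation at $p=q$ forces $\gamma=1$ by the expansion at $p\to0^+$. The paper instead discards the multiplicative equation and reapplies the differentiability machinery of Theorem~\ref{theorem: yaari2} (Lemma~\ref{lemma: calc} together with a.e.\ differentiability of monotone functions) to the additive equation~(\ref{efin}). Your route is more elementary at this step and is a genuine simplification; the paper's buys uniformity, since it recycles verbatim the argument already needed for the pure distortion case. The individual computations you rely on -- the distribution of $(aB_p+bB_q)^h$, the factorization $\EE(e^{\beta X^h})\,\EE(e^{\beta Y^h})$, the linear independence of the four monomials, and the asymptotics forcing $\gamma=1$ -- all check out.
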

\proof
Let us first assume that $\rho_c^h$ is translation invariant. 
Take $X=zB_p$ with $z>0$ and $B_p$ a Bernoulli. We then have
$$\PP(c(X+m)>x)=\left\{\begin{array}{cl}
1&\mbox{if }x<c(m)\\
p&\mbox{if } c(m)\leq x< c(m+z)\\
0&\mbox{if }x\geq c(m+z)
\end{array}\right.$$
so that using (\ref{drm3}) and distiguishing cases according to the signs of $m$ and $m+z$, we get 
in all situations
$$\rho_c^h(X+m)=c^{-1}\big(h(p{})c(m\!+\!z)+(1\!-\!h(p{}))c(m)\big).$$
In particular for $m=0$ we have $\rho_c^h(X)=c^{-1}(h(p{})c(z))$
so that the translation invariance $\rho_c^h(X+m)=\rho_c^h(X)+m$ yields 
$$h(p{})c(m+z)+(1\!-\!h(p{}))c(m)=c\left(c^{-1}(h(p{})c(z))+m\right).$$
Letting $\varepsilon= c^{-1}\left(h(p{})c(z)\right)>0$ we get the analog of (\ref{eq:cost2})
\be\label{eq:cost2a}
c(\varepsilon)[c(m+z)-c(m)]=c(z)[c(m+\varepsilon)-c(m)].
\ee
In the case $z<0$, noting that $B_p\sim 1-B_{\bar p}$ with $\bar p=1\!-\!p$, we 
may write $X+m=-zB_{\bar p}+(m+z)$ so we get a similar formula for $\rho_c^h(X+m)$
by replacing $p$ by $\bar p$, $z$ by $-z$, and $m$ by $m+z$, namely
$$\rho_c^h(X+m)=c^{-1}\big(h(\bar p{})c(m)+(1\!-\!h(\bar p{}))c(m+z)\big)$$
from which we obtain again (\ref{eq:cost2a}) this time with $\varepsilon=c^{-1}((1-h(\bar p))c(z))<0$.
Proceeding as in the proof of Theorem \ref{theorem: vnm2} we deduce that
$c(\cdot)$ is either an exponential function or the identity map, which proves
our first claim.

Let us assume next that $\rho_c^h$ satisfies the stronger condition of additive 
consistency, and let us show that in this case $h(x)=x$. The case when $c(\cdot)$ is
the identity was settled in the previous section so we just consider the
exponential case $c(x)=[e^{\beta x}-1]/\beta$. Let us consider two independent 
Bernoullis $B_p$ and $B_q$ so that for all $z>0$ we have
\begin{align}\label{eq: exp1}
	\rho_c^h(zB_p+zB_q)=\rho_c^h(zB_p)+\rho_c^h(zB_q). 
\end{align}
The formulas given in the first part of the proof show that the right-hand side is
equal to $c^{-1}\left(h(p)c(z)\right)+c^{-1}\left(h(q)c(z)\right)$. To compute the 
expression on the left we observe that $\PP(c(zB_p+zB_q)>x)=\PP(B_p+B_q>c^{-1}(x)/z)$
and we may use (\ref{pcasos}) to obtain
\begin{eqnarray*}
\rho_c^h(zB_p+zB_q)
&=&\mbox{$c^{-1}\left(\int^{c(z)}_0h\left(1\!-\!\bar{p}\bar{q}\right)dx+\int^{c(2z)}_{c(z)}h\left(pq\right)dx\right)$}\\
&=&c^{-1}\left(c(z)h\left(1\!-\!\bar{p}\bar{q}\right)+[c(2z)\!-\!c(z)]h\left(pq\right)\right).
\end{eqnarray*}
Plugging these formulas into (\ref{eq: exp1}) we have
$$c(z)h\left(1\!-\!\bar{p}\bar{q}\right)+(c(2z)\!-\!c(z))h\left(pq\right)=c\left(c^{-1}(h(p{})c(z))\!+\!c^{-1}(h(q)c(z))\right).$$
Using the exponential form of $c(\cdot)$ the left-hand side is given by
\begin{align*}
\mbox{$\frac{e^{\beta z}-1}{\beta}h\left(1-\bar{p}\bar{q}\right)+\frac{e^{2\beta z}-e^{\beta z}}{\beta}h\left(pq\right)$}
\end{align*}
whereas after some manipulation the right-hand side is seen to be
\begin{align*}
\mbox{$\frac{e^{\beta z}-1}{\beta}\left(h(p)+h(q)+h(p)h(q)[e^{\beta z}-1]\right)$}
\end{align*}
so that the equation simplifies to
$$h(1-\bar p\bar q)+h(pq)-h(p{})-h(q)=e^{\beta z}\left(h(p)h(q)-h(pq)\right).$$
Since this holds for all $z>0$ we deduce that for all $p,q \in[0,1]$ 
\beq
h(p)h(q)&=&h(pq), \\
h(1-\bar{p}\bar{q})+h(pq)&=&h(p{})+h(q).
\eeq
The latter is the same as (\ref{efin}) so we may use the argument in Theorem \ref{theorem: yaari2} to conclude
that $h$ is the identity. 
\endproof

\section{A remark on dynamic risk measures}\label{dynrisk}

Time consistency is a central issue in multistage decision problems 
under risk where decisions are taken sequentially along periods $t=0,1,\ldots,T$. 
It has been thoroughly investigated using the concept
of {\em dynamic risk measures}: a sequence of risk measures, one for each period, usually 
obtained by iterated composition of conditional risk maps that progressively incorporate 
the random information revealed along time (see for instance \cite{cheridito1,detlefsen,pflug,rusz2} and 
references therein). This approach structurally avoids the 
inconsistencies and allows to deal with non-additive risk measures such as VaR or AVaR. Moreover,
the framework provides a dynamic programming recursion that allows
to characterize and eventually compute optimal solutions.

Route choice can also be seen as a sequential decision process where at each step 
the driver is located at an intermediate node where he must chose the next arc to follow. 
This view is actually the basis of most shortest path algorithms. It is then tempting to use dynamic 
risk measures to model route choice under risk. However, the notion of period is not obvious here. 
One option is to take the set of nodes in the network as state space and associate 
periods with link choice decisions so that time corresponds to the number of link choices 
that have been made so far. Naturally, one has to deal with the fact that the 
same node can be reached after different number of steps, depending on the number 
of links in the actual path followed. In particular it is unclear 
how to define the planning horizon $T$, maybe as the maximum number of links in all 
paths connecting the origin to the destination. 

Although one can find ways to frame route choice as a multistage decision 
process, we do not pursue this goal here. Instead, we point out yet another difficulty in this 
approach which has to do with the network representation. We illustrate this with a very 
simple example on the network in figure \ref{fig:fig4} with independent normally distributed
times $X\sim N(10,1)$, $Y\sim N(10,1)$ and $Z\sim N(20,3)$. Consider the non 
additive risk measure $\rho=\mbox{\rm AVaR}_p$, with 
$p$ chosen so that for all normal variables we have $\rho(X)=\EE(X)+ \sigma(X)$.
\begin{figure}[!htb]
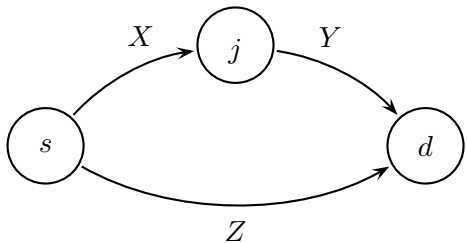

\centering
\psframebox*{%
\rule[-1.0cm]{0pt}{1.0cm}
$\psmatrix[mnode=Circle,radius=5mm,colsep=1.5cm,rowsep=0.3cm,arrowscale=1.5]
& [name=2] j \\
[name=1] s &  &  [name=3] d
\ncarc[nodesep=1pt, arcangle=20]{->}{1}{2}^{X}
\ncarc[nodesep=1pt, arcangle=20]{->}{2}{3}^{Y}
\ncarc[nodesep=1pt, arcangle=-30]{->}{1}{3}_{Z}
\endpsmatrix$ }
\caption{Counterexample with normal distributions.}
\label{fig:fig4}
\end{figure}
By independence, an iterated dynamic risk measure will 
evaluate the risk for the upper route as 
$\rho(X+\rho(Y|X))=\rho(X+\rho(Y))=\rho(X)+\rho(Y)=22$. This is larger than 
$\rho(Z)=20+\sqrt{3}$ which is then the optimal choice.
Suppose now that we merge both upper links into a single arc with 
time $U=X+Y\sim N(20,2)$, which is just a matter of how we decide 
to model the network. In this case the upper route has risk
$\rho(U)=20+\sqrt{2}$ and has displaced $Z$ as the optimal solution.
The conflict arises since there is no clear notion of period to guide our
choice of the representation of the network. This will occur whenever one 
deals with non-additive risk measures, while for additive risk measures the
conflict disappears.

\section{Related work}\label{relatedwork}

Risk-sensitive route choice is a relatively new research area which has been 
growing steadily in the last decade or so. General discussions on risk evaluation 
in the context of route choice can be found in Bates {\em et al.} \cite{bates}, Noland \cite{noland}, 
and Hollander \cite{hollander}. A mean-stdev risk model for atomic and non-atomic traffic equilibrium 
was investigated by Nikolova and Stier-Moses \cite{stier1}, distinguishing the 
case when only the expected values depend on the traffic intensity from the more 
difficult case where also the variance is flow-dependent.
A similar traffic equilibrium model was considered by Ordo\~nez and Stier-Moses in \cite{ordonez1}, 
in which risk-aversion is treated by aggregating a variability index to the expected value. This is compared 
to a model based on $\alpha$-percentiles as well as a novel approach that uses 
ideas from robust optimization. Since computing an $\alpha$-percentile equilibrium is difficult, 
they investigate two classes of approximations which provide a better fit than a standard 
Wardrop model. Percentile equilibria in route choice were also investigated by Nie \cite{nie3}.

Algorithms to compute optimal routes for the mean-stdev objective were studied 
by Nikolova \cite{nikolova1} and Nikolova {\em et al.} \cite{nikolova2}. Despite the combinatorial nature of the problem and 
the nonlinear objective function, an exact algorithm with sub-exponential 
complexity $n^{O(\log n)}$ is found. The main difficulty here comes from the non-additivity of the 
standard deviation. As illustrated by the example in the Introduction, the optimality of a 
path is not inherited by its subpaths,  which prevents the use of dynamic programming 
and makes the problem much more difficult to solve. In contrast, when using additive 
consistent risk measures this difficulty disappears and route choice reduces to a standard 
shortest path problem.

A different approach to risk-averse path choice considers user preferences based 
on the on-time arrival probability. This was studied by Nie and Wu \cite{nie1}, addressing 
the question of whether or not route optimality is inherited by subpaths.  An algorithm for this 
objective function was also given by Nikolova {\em et al.} \cite{nikolova2}. A related approach by Nie {\em et al. }
\cite{nie2,wu1} reconsiders the route choice question under stochastic dominance constraints.

Our contribution partially differs from the previous ones as it uses risk measures to 
quantify route preferences. Exploiting the axiomatic frameworks provided by 
the theories of choice we showed that, in wide classes of risk functionals, the 
entropic risk measures emerge as the only ones that guarantee a form 
of consistency in route choice. In this light, all the models discussed above 
are susceptible to exhibit inconsistencies. While this raises a serious question 
about the capacity of these models to capture rational behavior, it 
does not invalidate them. From a practical viewpoint, all these models
may plausibly describe the behavior of some drivers and ---after all---  no one has yet 
proved that drivers are actually consistent in their decisions! 
From a theoretical perspective our results require the preferences to be defined 
and satisfy additive consistency throughout the space $L^\infty(\Omega,\F,\PP)$. 
This might be asking too much as one could argue that drivers are only able to make 
choices in a much narrower subset of random variables which might not be even a 
linear subspace ({\em e.g.} a set of uniformly bounded non-negative variables).
In summary, while our contribution reveals some strong and interesting consequences 
of additive consistency, there is still work to be done before one 
can provide firm recommendations as to which is the most appropriate way to
model route choice under risk.

\bibliographystyle{acm}
\bibliography{bibliography}
\end{document}